\documentclass[12pt]{article}
\usepackage[a4paper,margin=1in]{geometry}
\usepackage{amssymb}
\usepackage{amsmath}
\usepackage{amsfonts}
\usepackage{amsthm}
\usepackage{color}
\usepackage{float}
\usepackage[all]{xy}
\usepackage{scalerel}
\usepackage{mathabx}
\usepackage{delimset}

\newcommand{\C}{\mathbb{C}}

\newcommand{\R}{\mathbb{R}}

\newcommand{\e}{\mathrm{e}}

\newcommand{\rr}{\mathrm{R}}

\newcommand{\pp}{\mathrm{P}}
\newcommand{\E}{\mathrm{E}}

\newcommand{\RR}{\mathcal{R}}

\newcommand{\qbinom}{\genfrac{[}{]}{0pt}{}}

\newtheorem{theorem}{Theorem}
\newtheorem{lemma}{Lemma}

\newtheorem{proposition}{Proposition}

\begin{document}

\title{\textbf{A Rigidity Property of the Deformed q-Exponential Function}}
\author{Ronald Orozco L\'opez}

\newcommand{\Addresses}{{
  \bigskip
  \footnotesize

  \textit{E-mail address}, R.~Orozco: \texttt{rj.orozco@uniandes.edu.co}
  
}}

\maketitle

\begin{abstract}
The deformed \(q\)-exponential function \[ e_q(z,u) = \sum_{n=0}^{\infty} u^{\binom{n}{2}} \frac{z^n}{(q;q)_n} \] provides a common framework containing several classical \(q\)-exponential functions as particular cases, including the Jackson \(q\)-exponentials \(e_q(z)\) and \(E_q(z)\). In this paper we investigate the multiplicative inversion problem \[ e_q(z,u)e_q(-z,v)=1, \] and determine all pairs of deformation parameters \((u,v)\) for which this identity holds. To this end, we introduce a family of coefficient polynomials whose common zeros characterize the inversion property. A geometric analysis of the first nontrivial coefficients reduces the problem to two parameter branches. The symmetric branch is excluded through a parity phenomenon, while the nonsymmetric branch is completely determined by the first two coefficient constraints. As a consequence, we prove a rigidity theorem showing that \[ e_q(z,u)e_q(-z,v)=1 \] if and only if \[ (u,v)=(1,q) \qquad\text{or}\qquad (u,v)=(q,1). \] Thus the classical Jackson inversion identity is rigid within the deformed family and no new multiplicative inversion identities arise from the deformation parameter.
\end{abstract} 
{\bf Keywords:} Deformed $q$-exponential function, Jackson $q$-exponentials, multiplicative inversion, rigidity theorem, $q$-special functions.\\
{\bf Mathematics Subject Classification:} 33D15, 05A30, 33D05.

\section{Introduction} 

One of the most fundamental identities in the theory of \(q\)-special functions is the classical Jackson inversion formula \[ e_q(z)E_q(-z)=1, \] where \[ e_q(z) = \sum_{n=0}^{\infty} \frac{z^n}{(q;q)_n} \] and \[ E_q(z) = \sum_{n=0}^{\infty} q^{\binom{n}{2}} \frac{z^n}{(q;q)_n}. \] This identity plays a central role in \(q\)-analysis and reflects the complementary nature of the two Jackson \(q\)-exponential functions.

Recently, the deformed \(q\)-exponential function \[ e_q(z,u) = \sum_{n=0}^{\infty} u^{\binom{n}{2}} \frac{z^n}{(q;q)_n} \] was introduced as a unifying framework for several classical \(q\)-special functions. Indeed, \[ e_q(z,1)=e_q(z), \qquad e_q(z,q)=E_q(z), \] while other choices of the deformation parameter lead to functions related to the Exton and Rogers--Ramanujan \(q\)-exponentials. This naturally raises the question of which classical properties of \(q\)-analysis survive under deformation.

The present paper focuses on one of the most basic structural properties of the Jackson \(q\)-exponentials, namely the inversion identity. Since the classical formula can be written as \[ e_q(z,1)e_q(-z,q)=1, \] it is natural to ask whether other values of the deformation parameters produce similar identities. More precisely, we investigate the multiplicative inversion problem \[ e_q(z,u)e_q(-z,v)=1. \]

Our approach is based on the introduction of a family of coefficient polynomials \(P_n(u,v;q)\) obtained from deformed homogeneous polynomials associated with the product \(e_q(z,u)e_q(-z,v)\). The inversion problem is thereby converted into an algebraic problem concerning the common zeros of the sequence \(\{P_n(u,v;q)\}_{n\ge1}\). A geometric analysis of the first coefficients reveals a remarkable splitting of the parameter space into symmetric and nonsymmetric branches.

The main result of the paper is a rigidity theorem showing that the classical Jackson identity is isolated within the deformed family. More precisely, we prove that \[ e_q(z,u)e_q(-z,v)=1 \] holds only for the classical pairs \[ (u,v)=(1,q) \] and \[ (u,v)=(q,1). \] In particular, no new multiplicative inversion identities arise from the deformation parameter.

\section{Preliminaries} 

This section collects the notation and basic facts from \(q\)-analysis that will be used throughout the paper. We assume \(0<q<1\).

\subsection{$q$-shifted factorials} 

A basic object of the theory is the $q$-shifted factorial, defined by \[ (a;q)_n= \begin{cases} 1, & n=0,\\[2mm] \displaystyle\prod_{k=0}^{n-1}(1-aq^k), & n\ge1, \end{cases} \qquad a\in\mathbb{C}. \] The infinite $q$-shifted factorial is \[ (a;q)_\infty = \prod_{k=0}^{\infty}(1-aq^k), \qquad |q|<1. \] 

\subsection{The Deformed $q$-Exponential Function} 

In \cite{orozco} was defined the deformed $q$-exponential by
\begin{equation}\label{eqn_dqexp}
\e_q(z,u)=\sum_{n=0}^{\infty}u^{\binom{n}{2}}\frac{z^n}{(q;q)_n}.
\end{equation}
From Eq.(\ref{eqn_dqexp}), some deformed $q$-exponential functions are:
The Euler $q$-exponential functions are obtained by set $u=1$ and $u=q$, respectively. They are
    \[
    e_{q}(z)=\e_{q}(z,1)=\sum_{n=0}^{\infty}\frac{z^n}{(q;q)_n}=\frac{1}{(z;q)_\infty},
    \]
    convergent in $\vert z\vert<\frac{1}{1-q}$, and
    \[
    \E_{q}(z)=\e_{q}(z,q)=\sum_{n=0}^{\infty}q^{\binom{n}{2}}\frac{z^n}{(q;q)_n}=(-z;q)_\infty
    \]
    convergent for all $z\in\C$. 
If $u=\sqrt{q}$ in Eq.(\ref{eqn_dqexp}), we obtain the Exton $q$-exponential function \cite{exton}
    \[
    \mathcal{E}_{q}(z)=\e_{q}(z,\sqrt{q})=\sum_{n=0}^{\infty}q^{\frac{1}{2}\binom{n}{2}}\frac{z^n}{(q;q)_n}={}_{1}\phi_{1}\left(\begin{array}{c}
         0\\
         -\sqrt{q}
    \end{array};\sqrt{q},-z\right),\ z\in\C.
    \]
By the mapping $z\mapsto qz$ and by set $u=q^2$ in Eq.(\ref{eqn_dqexp}), we obtain the Rogers-Ramanujan function
    \[
    \mathcal{R}_{q}(z)=\e_{q}(qz,q^2)=\sum_{n=0}^{\infty}q^{n^2}\frac{z^n}{(q;q)_n},\qquad z\in\C.
    \]
A representation of $\RR_q(z)$ in basic hypergeometric series is \cite{gasper}    
    \[
    \RR_q(z)=\frac{(zq^5,z^2q^2,z^2q^3;q^5)_{\infty}}{(zq;q)_{\infty}}{}_{3}\phi_{2}\left(\begin{array}{c}
         z/q,z,zq\\
         z^2q^2,z^2q^3
    \end{array};q^5,zq^5\right).    
    \]
More generally, for $A,B\in\R$, $A\geq0$, if $u=q^{2A}$ and by set $z\mapsto q^{A+B}z$, we obtain the generalized Rogers-Ramanujan function \cite{bailey,slater}
\[
\RR_{q}^{(A,B)}(z)=\e_{q}(q^{A+B}z,q^{2A})=\sum_{n=0}^{\infty}q^{An^2+Bn}\frac{z^n}{\brk[s]{n}_q!}.
\]

The examples above illustrate the breadth of the family \(\e_q(z,u)\). Since both Jackson \(q\)-exponentials arise as particular cases, it is natural to investigate whether the classical inversion identity admits a nontrivial extension within this larger deformed family. This question motivates the analysis developed in the next sections.

\section{Deformed homogeneous polynomials}

\subsection{Definition}

The multiplicative inversion problem studied in this paper naturally leads to a family of deformed homogeneous polynomials introduced in \cite{orozco}. These polynomials provide the coefficient structure of the product of two deformed $q$-exponential functions and therefore serve as the appropriate framework for our analysis. We define for $u,v\in\mathbb C$ the $(u,v)$-deformed homogeneous polynomials
\begin{equation}
    \rr_{n}(x,y;u,v|q)=\sum_{k=0}^{n}\qbinom{n}{k}_{q}u^{\binom{n-k}{2}}v^{\binom{k}{2}}x^{n-k}y^{k}.
\end{equation}
The polynomials $\rr_n$ has the following representation
\begin{equation}\label{eqn_repre}
    \e_q(xz,u)\e_q(yz,v)=\sum_{n=0}^{\infty}\rr_n(x,y;u,v\,|\,q)\frac{z^n}{(q;q)_n}.
\end{equation}

\subsection{The coefficient polynomials} 

The inversion problem becomes considerably more transparent after a specialization of the homogeneous basis. For this reason we introduce the coefficient polynomials
\begin{equation} 
\pp_n(u,v;q) := \rr_n(1,-1;u,v\,|\,q). 
\end{equation}
Then
\begin{equation}\label{eqn_PolyUV}
\pp_n(u,v;q) = \sum_{k=0}^{n} (-1)^k \qbinom{n}{k}_q u^{\binom{n-k}{2}} v^{\binom{k}{2}} . 
\end{equation}

\subsection{Generating function}

The relevance of the polynomials \(\pp_n(u,v;q)\) stems from the fact that they appear as the coefficients of the product \(\e_q(z,u)\e_q(-z,v)\). Consequently, the multiplicative inversion problem can be translated into a problem about the simultaneous vanishing of a sequence of coefficient polynomials. From Eq.(\ref{eqn_repre}), the representation of the polynomials $\pp_n$ is
\begin{equation}\label{eqn_GFP}
    \e_q(z,u)\e_q(-z,v) = \sum_{n=0}^{\infty} \pp_n(u,v;q) \frac{z^n}{(q;q)_n}.
\end{equation}

\begin{proposition}\label{prop_AI-AP}
    \[ \e_q(z,u)\e_q(-z,v)=1 \] 
    if and only if 
    \[ \pp_n(u,v;q)=0, \qquad n\ge 1. \]
\end{proposition}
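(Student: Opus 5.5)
The plan is to compare coefficients in the generating function identity (\ref{eqn_GFP}), viewing both sides as formal power series in $z$, or as analytic functions on a common disc about the origin. I also need $\pp_0(u,v;q)=1$, which holds because the only term at $n=0$ is $k=0$, where $\qbinom{0}{0}_q=1$ and $u^{\binom{0}{2}}=v^{\binom{0}{2}}=1$. The right-hand side of (\ref{eqn_GFP}) is therefore $1+\sum_{n\ge1}\pp_n(u,v;q)\,z^n/(q;q)_n$.

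For the implication ``$\Leftarrow$'': if $\pp_n(u,v;q)=0$ for all $n\ge1$, then (\ref{eqn_GFP}) gives $\e_q(z,u)\e_q(-z,v)=\pp_0=1$ at once. For ``$\Rightarrow$'': if the product is identically $1$, then the power series $\sum_{n\ge0}\pp_n(u,v;q)z^n/(q;q)_n$ equals the constant $1$. Uniqueness of power series coefficients then forces $\pp_n(u,v;q)/(q;q)_n=0$ for every $n\ge1$. Since $0<q<1$, each factor $1-q^k$ is nonzero, so $(q;q)_n\neq0$ and hence $\pp_n(u,v;q)=0$.

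The only real point of care is justifying (\ref{eqn_GFP}) and the coefficient comparison. The Cauchy product of the two series gives coefficient
\[
\sum_{k=0}^n \frac{u^{\binom{n-k}{2}}}{(q;q)_{n-k}}\,\frac{(-1)^k v^{\binom{k}{2}}}{(q;q)_k}=\frac{\pp_n(u,v;q)}{(q;q)_n},
\]
using $\qbinom{n}{k}_q=(q;q)_n/((q;q)_k(q;q)_{n-k})$. This step is formal, but at the level of functions it needs both series to converge near $0$. For general complex $u,v$ with $|u|>1$, the factor $u^{\binom{n}{2}}$ can make the radius of convergence zero. I would handle this by reading the identity $\e_q(z,u)\e_q(-z,v)=1$ as an identity in $\C[[z]]$, which is the natural meaning for all parameters. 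When $|u|,|v|\le1$ both series have positive radius, and the analytic and formal statements coincide by the identity theorem. Since the Cauchy product is legitimate in $\C[[z]]$, the equivalence follows in either interpretation.
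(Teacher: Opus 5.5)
Your proof is correct and follows the paper's intended argument: the paper gives no written proof of this proposition and relies on comparing coefficients in the generating function (\ref{eqn_GFP}), which is exactly what you do. Your extra checks fill in details the paper leaves implicit: that $\pp_0(u,v;q)=1$, that $(q;q)_n\neq0$ for $0<q<1$, and that the identity must be read in $\C[[z]]$ when $|u|>1$ or $|v|>1$, since the corresponding series then has zero radius of convergence.
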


Proposition~1 transforms the analytic identity 
\[ \e_q(z,u)\e_q(-z,v)=1 \] into an algebraic problem. From this point on, our task is to determine all parameter pairs \((u,v)\) for which every coefficient polynomial \(\pp_n(u,v;q)\) vanishes.

\subsection{Basic coefficients}

\begin{proposition}
   \[ \pp_{2}(u,v;q) = u+v-(1+q). \] 
\end{proposition}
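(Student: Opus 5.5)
The plan is to evaluate the defining sum (\ref{eqn_PolyUV}) directly at $n=2$; only three terms occur, $k=0,1,2$. The needed ingredients are $\binom{2}{2}=1$, $\binom{1}{2}=\binom{0}{2}=0$, $\qbinom{2}{0}_q=\qbinom{2}{2}_q=1$, and
\[
\qbinom{2}{1}_q=\frac{(q;q)_2}{(q;q)_1(q;q)_1}=\frac{(1-q)(1-q^2)}{(1-q)^2}=1+q.
\]

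Substituting term by term:
\begin{itemize}
\item $k=0$ gives $u^{\binom{2}{2}}v^{0}=u$;
\item $k=1$ gives $-\qbinom{2}{1}_q u^{0}v^{0}=-(1+q)$;
\item $k=2$ gives $(-1)^2 u^{0}v^{\binom{2}{2}}=v$.
\end{itemize}
Adding these yields $\pp_2(u,v;q)=u+v-(1+q)$, as claimed.

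There is no real obstacle. The only points requiring care are the convention $\binom{m}{2}=0$ for $m\in\{0,1\}$, so that the middle term carries no power of $u$ or $v$, and the evaluation of the Gaussian binomial $\qbinom{2}{1}_q$ from the $q$-shifted factorials.

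As a consistency check, the result can be compared with (\ref{eqn_GFP}) at $(u,v)=(1,q)$: there $\pp_2=1+q-(1+q)=0$, as the Jackson identity requires.
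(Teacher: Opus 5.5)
Your proposal is correct and follows the paper's approach: a direct evaluation of the defining sum for $\pp_n$ at $n=2$. The paper states this proposition without a separate proof, just as it treats $\pp_3$ by direct computation, and your term-by-term check, including $\qbinom{2}{1}_q=1+q$ and $\binom{1}{2}=\binom{0}{2}=0$, supplies exactly that computation.
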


\begin{proposition}
    \[ \pp_3(u,v;q) = (u-v) \Bigl( u^2+uv+v^2-(1+q+q^2) \Bigr). \]
\end{proposition}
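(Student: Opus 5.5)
We expand the definition in Eq.~(\ref{eqn_PolyUV}) with $n=3$ and then factor. The Gaussian binomials are $\qbinom{3}{0}_q=\qbinom{3}{3}_q=1$ and $\qbinom{3}{1}_q=\qbinom{3}{2}_q=1+q+q^2$. The exponents are $\binom{3-k}{2}$ for $u$ and $\binom{k}{2}$ for $v$, which take the values $(3,0),(1,0),(0,1),(0,3)$ for $k=0,1,2,3$. Hence
\[
\pp_3(u,v;q)=u^3-(1+q+q^2)\,u+(1+q+q^2)\,v-v^3 .
\]

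Next we group the cubic terms and the linear terms separately:
\[
\pp_3(u,v;q)=(u^3-v^3)-(1+q+q^2)(u-v).
\]
The difference of cubes factors as $u^3-v^3=(u-v)(u^2+uv+v^2)$. Pulling out the common factor $u-v$ gives
\[
\pp_3(u,v;q)=(u-v)\Bigl(u^2+uv+v^2-(1+q+q^2)\Bigr),
\]
which is the stated identity.

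The computation is routine. The only point needing care is the bookkeeping of the signs $(-1)^k$ and the exponents $\binom{n-k}{2}$ and $\binom{k}{2}$. In particular, both middle terms carry only a single power of $u$ or $v$, because $\binom{2}{2}=1$ and $\binom{1}{2}=0$.

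As a sanity check, we substitute $(u,v)=(1,q)$. The second factor becomes $1+q+q^2-(1+q+q^2)=0$, consistent with the Jackson identity and Proposition~\ref{prop_AI-AP}.
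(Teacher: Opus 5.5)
Your proposal is correct and follows the paper's proof: you expand the defining sum at $n=3$ to get $u^3-(1+q+q^2)u+(1+q+q^2)v-v^3$, then factor out $u-v$ using the difference of cubes. Your explicit tracking of the Gaussian binomials, the signs, and the exponents spells out the ``direct computation'' that the paper leaves implicit.
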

\begin{proof}
Direct computation from (\ref{eqn_PolyUV}) gives 
\[ 
    \pp_3(u,v;q) = u^3-(1+q+q^2)u+ (1+q+q^2)v-v^3. 
\] 
Factoring yields the result.
\end{proof}

The first two nontrivial coefficients already contain a remarkable amount of information. As we shall see, the conditions \[\pp_2(u,v;q)=0, \qquad \pp_3(u,v;q)=0 \] force the parameter space to split into only two possible branches. This geometric splitting is described in the next result.

\begin{lemma}[\textbf{Geometry of the first constraints}]\label{lemma_geo} 
The simultaneous conditions \[ \pp_2(u,v;q)=0, \qquad \pp_3(u,v;q)=0 \] define the intersection of the affine line \[ u+v=1+q \] with the cubic variety 
\[ 
(u-v)\Bigl(u^2+uv+v^2-(1+q+q^2)\Bigr)=0. 
\] 
Consequently, every solution belongs to one of the following two branches: \[ u=v, \] or \[ u^2+uv+v^2=1+q+q^2. \]
\end{lemma}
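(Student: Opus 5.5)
The plan is to take the explicit forms of the first two coefficients from the two preceding propositions and substitute them directly. The first gives
\[
\pp_2(u,v;q)=u+v-(1+q).
\]
So the condition $\pp_2(u,v;q)=0$ is exactly the affine line $u+v=1+q$ in the $(u,v)$-plane, which we regard over $\C$ with $q$ fixed. The second proposition gives the factorization
\[
\pp_3(u,v;q)=(u-v)\bigl(u^2+uv+v^2-(1+q+q^2)\bigr).
\]
Hence $\pp_3(u,v;q)=0$ is precisely the stated cubic, and the simultaneous system is the intersection of the line with this cubic. That settles the first assertion of Lemma~\ref{lemma_geo}.

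For the branch decomposition, we use that $\C[u,v]$ (or simply $\C$, pointwise) has no zero divisors. If $(u,v)$ satisfies $\pp_3(u,v;q)=0$, then the product of the two factors vanishes at $(u,v)$. Therefore either $u-v=0$ or $u^2+uv+v^2-(1+q+q^2)=0$, and every common zero lies on one of the two listed branches. The cubic is the union of the line $u=v$ and the conic $u^2+uv+v^2=1+q+q^2$, so the solution set is (line $\cap$ diagonal) $\cup$ (line $\cap$ conic).

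As an optional sharpening, useful for later sections, we can make each branch explicit. On the symmetric branch, $u=v=(1+q)/2$. On the nonsymmetric branch, substitute $v=1+q-u$ and use $u^2+uv+v^2=(u+v)^2-uv$. The conic condition becomes $uv=(1+q)^2-(1+q+q^2)=q$. Together with $u+v=1+q$, this means $u,v$ are the roots of $t^2-(1+q)t+q=(t-1)(t-q)$, so $\{u,v\}=\{1,q\}$.

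There is no real obstacle here. The only point requiring care is verifying the factorization of $\pp_3$, which is already supplied by the preceding proposition.
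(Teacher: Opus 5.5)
Your proof is correct and matches the paper's argument: you read off the line $u+v=1+q$ from $\pp_2$, use the factorization of $\pp_3$, and then split the vanishing product pointwise. Your optional sharpening is also correct, namely the point $u=v=(1+q)/2$ on the symmetric branch and the Vieta step $uv=q$, $u+v=1+q$ giving $\{u,v\}=\{1,q\}$; it is what the paper does later in Lemmas~\ref{lemma_SENS} and~\ref{lemma_SP23}.
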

\begin{proof}
From Proposition 2, 
\[ \pp_2(u,v;q)=0 \] 
is equivalent to 
\[ u+v=1+q. \] 
By Proposition 3, 
\[ \pp_3(u,v;q)=0 \] 
if and only if 
\[ (u-v) \Bigl( u^2+uv+v^2-(1+q+q^2) \Bigr)=0. \] 
Therefore every common solution of \(\pp_2(u,v;q)=\pp_3(u,v;q)=0\) must lie on the affine line 
\[ u+v=1+q, \] 
and simultaneously belong to one of the two branches 
\[ u=v, \] 
or 
\[ u^2+uv+v^2=1+q+q^2. \] 
This completes the proof.
\end{proof}
Lemma \ref{lemma_geo} reduces the problem to two distinct parameter branches. We first investigate the symmetric branch \(u=v\). Rather unexpectedly, this case exhibits a parity phenomenon.

\section{A Rigidity Theorem for the Deformed $q$-Exponentials} 

\subsection{The branch $u=v$}

When the deformation parameters coincide, the coefficient polynomials acquire a strong symmetry that forces all odd coefficients to vanish.

\begin{proposition}\label{prop_OCV}
  For every $n\ge0$, \[ \pp_{2n+1}(u,u;q)=0. \]  
\end{proposition}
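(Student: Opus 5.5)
We argue directly from the explicit formula (\ref{eqn_PolyUV}), using the involution $k\mapsto N-k$ on the summation index, where $N=2n+1$. Setting $v=u$ gives
\[
\pp_{N}(u,u;q)=\sum_{k=0}^{N}(-1)^k\qbinom{N}{k}_q u^{\binom{N-k}{2}}u^{\binom{k}{2}} .
\]
The summand without its sign, $a_k:=\qbinom{N}{k}_q u^{\binom{N-k}{2}+\binom{k}{2}}$, is invariant under $k\mapsto N-k$. The exponent $\binom{N-k}{2}+\binom{k}{2}$ is visibly symmetric, and the Gaussian binomial satisfies $\qbinom{N}{k}_q=\qbinom{N}{N-k}_q$. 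The latter follows from the definition $\qbinom{N}{k}_q=(q;q)_N/\bigl((q;q)_k(q;q)_{N-k}\bigr)$.

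The key observation is about the sign. Since $N$ is odd, $(-1)^{N-k}=-(-1)^k$, so the involution reverses the sign of each term. The involution has no fixed points, because $k=N-k$ would force $N=2k$ to be even. The index set $\{0,1,\dots,N\}$ therefore splits into the $n+1$ disjoint pairs $\{k,N-k\}$ with $0\le k\le n$. Each pair contributes $(-1)^k a_k+(-1)^{N-k}a_{N-k}=(-1)^k(a_k-a_k)=0$. Summing over the pairs gives $\pp_{2n+1}(u,u;q)=0$.

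Equivalently, one can write $S=\sum_k(-1)^ka_k$ and reindex by $k\mapsto N-k$. This gives $S=-S$, hence $S=0$, which avoids the explicit pairing. There is no real obstacle here. The only points to check are the symmetry of the Gaussian binomial and the parity of $N$, both of which are immediate. The identity holds for all $u\in\C$, and also for $n=0$, where $\pp_1(u,u;q)=1-1=0$.

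Alternatively, (\ref{eqn_GFP}) with $v=u$ shows that $F(z)=\e_q(z,u)\e_q(-z,u)$ is an even function of $z$. Its odd Taylor coefficients $\pp_{2n+1}(u,u;q)/(q;q)_{2n+1}$ therefore vanish. We would present this as a remark, with the coefficient pairing as the main proof.
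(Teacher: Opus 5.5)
Your proof is correct and is essentially the paper's argument. The paper also pairs terms under the fixed-point-free involution $k\mapsto 2n+1-k$. It uses the symmetry of the Gaussian binomial and of the exponent $\binom{2n+1-k}{2}+\binom{k}{2}$ to get $T_{2n+1-k}=-T_k$.

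Your closing remark via evenness of $\e_q(z,u)\e_q(-z,u)$ is also valid, but read it as an identity of formal power series. For $|u|>1$ the series $\e_q(z,u)$ has radius of convergence zero.
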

\begin{proof}
From Definition (\ref{eqn_PolyUV}), 
\[ 
\pp_{2n+1}(u,u;q) = \sum_{k=0}^{2n+1} (-1)^k \qbinom{2n+1}{k}_q u^{\binom{2n+1-k}{2}+\binom{k}{2}}. 
\] 
Let 
\[ 
T_k = (-1)^k \qbinom{2n+1}{k}_q u^{\binom{2n+1-k}{2}+\binom{k}{2}}. 
\] 
Using the symmetry of the $q$-binomial coefficients, 
\[ 
\qbinom{2n+1}{2n+1-k}_q = \qbinom{2n+1}{k}_q, 
\] 
we obtain 
\[ T_{2n+1-k} = (-1)^{2n+1-k} \qbinom{2n+1}{k}_q u^{\binom{k}{2} + \binom{2n+1-k}{2}}=-T_k. 
\] 
The terms cancel pairwise under the involution \[ k\longmapsto 2n+1-k. \] Because \(2n+1\) is odd, there is no fixed point of this involution. Hence every term cancels with a unique partner and 
\[ \pp_{2n+1}(u,u;q)=0. \] The proof is complete.
\end{proof}
The previous proposition shows that all odd coefficients in the expansion 
\[ 
\e_q(z,u)\e_q(-z,u) = \sum_{n=0}^{\infty} \pp_n(u,u;q) \frac{z^n}{(q;q)_n} 
\] 
vanish identically. Consequently, 
\[ 
\e_q(z,u)\e_q(-z,u) = 1+ \sum_{n=1}^{\infty} \pp_{2n}(u,u;q) \frac{z^{2n}}{(q;q)_{2n}}. 
\]
Thus, in the symmetric case \(u=v\), the product \(\e_q(z,u)\e_q(-z,u)\) is an even formal power series. The parity phenomenon might suggest the possibility of additional inversion identities within the symmetric branch. The next result shows that this is not the case. 

\begin{lemma}\label{lemma_SENS}
Let \(0<q<1\). Then the system \[ 
\pp_2(u,u;q)=0, \qquad \pp_4(u,u;q)=0 
\] 
has no solution.
\end{lemma}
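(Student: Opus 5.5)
The plan is to solve the first equation explicitly and then evaluate the second at the resulting point. By Proposition 2, $\pp_2(u,u;q)=2u-(1+q)$. So the condition $\pp_2(u,u;q)=0$ forces the single value
\[
u=\frac{1+q}{2}.
\]
This value is strictly positive for $0<q<1$, and in particular $u\neq 0$. It then suffices to show that $\pp_4(u,u;q)\neq0$ at this $u$.

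Next I compute $\pp_4(u,u;q)$ directly from (\ref{eqn_PolyUV}). The exponent of $u$ in the $k$-th term is $\binom{4-k}{2}+\binom{k}{2}$, which takes the values $6,3,2,3,6$ for $k=0,\dots,4$. The terms are paired by $k\mapsto 4-k$ exactly as in the proof of Proposition \ref{prop_OCV}. Here the sign is preserved rather than reversed, since $4$ is even. This gives
\[
\pp_4(u,u;q)=2u^6-2\qbinom{4}{1}_q u^3+\qbinom{4}{2}_q u^2 .
\]
I then use the factorizations $\qbinom{4}{1}_q=(1+q)(1+q^2)$ and $\qbinom{4}{2}_q=(1+q^2)(1+q+q^2)$ and pull out the factor $u^2$:
\[
\pp_4(u,u;q)=u^2\Bigl(2u^4-2(1+q)(1+q^2)u+(1+q^2)(1+q+q^2)\Bigr).
\]

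Now substitute $u=(1+q)/2$ into the bracket. The middle and last terms combine as
\[
(1+q^2)\bigl[(1+q+q^2)-(1+q)^2\bigr]=-q(1+q^2).
\]
Hence the bracket equals $\tfrac{(1+q)^4}{8}-q(1+q^2)$. Multiplying by $8$ gives
\[
(1+q)^4-8q-8q^3=1-4q+6q^2-4q^3+q^4=(1-q)^4 .
\]
Therefore, on the line $\pp_2(u,u;q)=0$,
\[
\pp_4(u,u;q)=\frac{(1+q)^2(1-q)^4}{32},
\]
which is strictly positive for $0<q<1$. This contradicts $\pp_4(u,u;q)=0$, so the system has no solution.

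The only delicate step is the algebra in the third paragraph: spotting the cancellation that collapses the bracket to the perfect fourth power $(1-q)^4$. Once $(1-q)^4$ appears, nonvanishing is immediate from $q\neq1$. The argument in fact shows the system has no solution for every $q\neq1$, not only for $0<q<1$.
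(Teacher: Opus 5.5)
Your proof is correct and follows the same route as the paper: solve $\pp_2(u,u;q)=0$ to get $u=\frac{1+q}{2}$, substitute into $\pp_4$, and conclude from strict positivity. Your closed form $\frac{(1+q)^2(1-q)^4}{32}$ is in fact the correct value. The paper's printed $\frac{(1+q)^2}{32}(5+4q+14q^2+4q^3+5q^4)$ is a miscalculation that happens to have the right sign: it fails to vanish at $q=1$, where $\pp_4(1,1;1)=\sum_k(-1)^k\binom{4}{k}=0$. The only thing to fix is your closing aside, which should say $q\neq\pm1$, since for $q=-1$ the point $u=0$ satisfies both equations.
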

\begin{proof}
From \(\pp_2(u,u;q)=0\) we obtain 
\[ 
u=\frac{1+q}{2}. 
\] 
Substituting into \(\pp_4\) yields 
\[ 
\pp_4\!\left(\frac{1+q}{2},\frac{1+q}{2};q\right) = \frac{(1+q)^2}{32} \Bigl( 5+4q+14q^2+4q^3+5q^4 \Bigr). 
\] 
Since all coefficients are positive and \(0<q<1\), the right-hand side is strictly positive. Hence 
\[ \pp_4(u,u;q)\neq0. \] 
Therefore, the system has no solution.
\end{proof}
Lemma~2 rules out the symmetric branch entirely. Consequently, any possible multiplicative inversion must arise from the nonsymmetric branch identified in Lemma~1. The remainder of the section is devoted to showing that this branch consists precisely of the classical pairs \((1,q)\) and \((q,1)\).

\subsection{The branch $u\neq v$}

By Lemma~\ref{lemma_SENS}, the symmetric branch cannot produce a multiplicative inversion identity. Consequently, it remains only to analyze the nonsymmetric branch arising from Lemma~\ref{lemma_geo}. Remarkably, the first two coefficient constraints already determine all admissible parameter pairs.

\begin{lemma}\label{lemma_SP23}
If \[ \pp_2(u,v;q)=P_3(u,v;q)=0 \] and $u\neq v$, then \[ (u,v)=(1,q) \] or \[ (u,v)=(q,1). \] 
\end{lemma}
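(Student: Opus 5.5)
By Propositions 2 and 3, the hypotheses $\pp_2(u,v;q)=\pp_3(u,v;q)=0$ together with $u\neq v$ are equivalent to the system
\[
u+v=1+q,\qquad u^2+uv+v^2=1+q+q^2 .
\]
This holds because the factor $u-v$ in $\pp_3$ is nonzero and may be cancelled. The plan is to solve this system through the elementary symmetric functions $s=u+v$ and $p=uv$, and then identify $u$ and $v$ as the roots of a quadratic.

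First, rewrite the second equation as $u^2+uv+v^2=(u+v)^2-uv=s^2-p$. Substituting $s=1+q$ gives
\[
p=(1+q)^2-(1+q+q^2)=q .
\]
Thus $u$ and $v$ have sum $1+q$ and product $q$. Therefore they are the two roots of
\[
t^2-(1+q)t+q=(t-1)(t-q)=0 ,
\]
so $\{u,v\}=\{1,q\}$, that is, $(u,v)=(1,q)$ or $(u,v)=(q,1)$.

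Finally, check consistency with the hypothesis $u\neq v$. Since $0<q<1$, we have $q\neq 1$, so both ordered pairs are genuinely nonsymmetric. Both pairs also satisfy the two displayed equations, which confirms that the branch is exactly this two-point set and not empty.

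There is no substantial obstacle here. The one point needing care is to cancel $u-v$ only after invoking the hypothesis $u\neq v$, rather than before. Solving the pair $s=1+q$, $p=q$ over $\C$ loses no solutions, because $u$ and $v$ are determined up to order by their sum and product.
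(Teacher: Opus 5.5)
Your proof is correct and is essentially the paper's own argument: $\pp_2=0$ gives $u+v=1+q$, the hypothesis $u\neq v$ lets you cancel the factor $u-v$ in $\pp_3$, and the identity $u^2+uv+v^2=(u+v)^2-uv$ then yields $uv=q$, which forces $\{u,v\}=\{1,q\}$. Your explicit factorization $t^2-(1+q)t+q=(t-1)(t-q)$ and your check that $q\neq 1$ only spell out steps the paper leaves implicit.
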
 
\begin{proof}
From $\pp_2(u,v;q)=0$ we obtain
\[
u+v=1+q.
\]
As $u\neq v$, then from Lemma \ref{lemma_geo}
\[
u^2+uv+v^2=1+q+q^2.
\]
As
\[
u^2+uv+v^2=(u+v)^2-uv,
\]
we obtain
\[
(1+q)^2-uv=1+q+q^2.
\]
Then
\[
uv=q,\qquad u+v=1+q.
\]
Therefore $(u,v)\in\{(1,q),(q,1)\}$.
\end{proof}
Hence the nonsymmetric branch consists exactly of the two classical Jackson pairs.

\begin{theorem}\label{theo_rigidez}
The identity \[ \e_q(z,u)\e_q(-z,v)=1 \] holds if and only if 
\[ (u,v)\in\{(1,q),(q,1)\}. \]
\end{theorem}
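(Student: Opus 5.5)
The proof splits into necessity and sufficiency, using Proposition~\ref{prop_AI-AP} to pass between the analytic identity and the vanishing of the coefficient polynomials $\pp_n(u,v;q)$.

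\emph{Necessity.} Suppose $\e_q(z,u)\e_q(-z,v)=1$. By Proposition~\ref{prop_AI-AP}, $\pp_n(u,v;q)=0$ for all $n\ge1$, and in particular $\pp_2=\pp_3=\pp_4=0$. Lemma~\ref{lemma_geo} then places $(u,v)$ on the branch $u=v$ or on the branch $u^2+uv+v^2=1+q+q^2$.

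If $u=v$, the pair satisfies $\pp_2(u,u;q)=\pp_4(u,u;q)=0$, which contradicts Lemma~\ref{lemma_SENS}. So this case cannot occur. The odd coefficients give no information here, since they vanish automatically by Proposition~\ref{prop_OCV}; this is why $\pp_4$ is the relevant test.

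If $u\neq v$, Lemma~\ref{lemma_SP23} gives $(u,v)\in\{(1,q),(q,1)\}$ directly.

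A small point deserves care. Lemma~\ref{lemma_SENS} is stated for real $0<q<1$. Its proof substitutes $u=(1+q)/2$, which is forced by $\pp_2(u,u;q)=0$ with no reality assumption on $u$, so the argument covers complex $u$ as well.

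\emph{Sufficiency.} Take $(u,v)=(1,q)$. The product formulas of the Preliminaries give
\[
\e_q(z,1)\,\e_q(-z,q)=\frac{1}{(z;q)_\infty}\,(z;q)_\infty=1
\]
for $|z|<1$, and hence as formal power series in $z$. Equivalently, $\pp_n(1,q;q)=0$ for all $n\ge1$, which is the $q$-binomial theorem identity $\sum_k(-1)^k\qbinom{n}{k}_q q^{\binom{k}{2}}=(1;q)_n=0$.

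For $(u,v)=(q,1)$, reindex $k\mapsto n-k$ in (\ref{eqn_PolyUV}) and use the symmetry of the $q$-binomial coefficients. This gives $\pp_n(v,u;q)=(-1)^n\pp_n(u,v;q)$, so $\pp_n(q,1;q)=(-1)^n\pp_n(1,q;q)=0$ for all $n\ge1$. Alternatively, replace $z$ by $-z$ in the $(1,q)$ identity. Proposition~\ref{prop_AI-AP} then finishes the proof.

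Given the earlier lemmas, no step is a genuine obstacle. The only points needing care are the domain issue just noted (reading the identity as one of formal power series, or restricting to $|z|<1$ where $e_q$ converges) and making sure Lemma~\ref{lemma_SENS} is applied with $\pp_4$ rather than an odd-index coefficient.
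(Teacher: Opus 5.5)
Your proof is correct and follows the paper's own route: Proposition~\ref{prop_AI-AP} with Lemmas~\ref{lemma_geo}, \ref{lemma_SENS} and \ref{lemma_SP23} for necessity, and the Jackson identity for sufficiency. Two points in your version are slightly more precise than the paper's: you state explicitly that $\pp_4=0$ is what excludes $u=v$, and you get $(q,1)$ via $z\mapsto -z$ rather than by ``interchanging the two factors''; the value displayed in Lemma~\ref{lemma_SENS} is miscomputed in the paper (it is actually $\pp_4\bigl(\tfrac{1+q}{2},\tfrac{1+q}{2};q\bigr)=(1+q)^2(1-q)^4/32$), but it is still nonzero for $0<q<1$, so your argument stands.
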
 
\begin{proof}
By Proposition~\ref{prop_AI-AP}, the identity 
\[ \e_q(z,u)\e_q(-z,v)=1 \] 
is equivalent to the vanishing of all coefficient polynomials 
\[ \pp_n(u,v;q), \qquad n\ge1. \] 
In particular, 
\[ \pp_2(u,v;q)=\pp_3(u,v;q)=0. \] 
By Lemma~\ref{lemma_geo}, every solution belongs either to the symmetric branch \(u=v\) or the nonsymmetric branch \(u\neq v\). Lemma~\ref{lemma_SENS} excludes the symmetric branch. Hence \(u\neq v\). Applying Lemma~\ref{lemma_SP23} yields 
\[ (u,v)=(1,q) \] 
or 
\[ (u,v)=(q,1). \] 
Conversely, the classical Jackson identity 
\[ \e_q(z)\E_q(-z)=1 \] 
implies 
\[ \e_q(z,1)\e_q(-z,q)=1. \] 
Interchanging the two factors gives 
\[ \e_q(z,q)\e_q(-z,1)=1. \] 
Therefore the stated pairs are precisely the solutions of the multiplicative inversion problem.
\end{proof}
Theorem \ref{theo_rigidez} shows that the classical Jackson identity is rigid within the deformed family \(\e_q(z,u)\). No new multiplicative inversion identities arise from the deformation parameter.

\section{Conclusion and Future directions}

In this paper we studied the multiplicative inversion problem for the deformed \(q\)-exponential family \(\e_q(z,u)\). By introducing a sequence of coefficient polynomials and analyzing the first nontrivial coefficient constraints, we reduced the problem to a finite algebraic classification. A geometric decomposition of the parameter space revealed two possible branches. The symmetric branch was excluded through a parity argument, while the nonsymmetric branch was shown to contain only the classical Jackson pairs. As a consequence, we proved that 
\[ \e_q(z,u)\e_q(-z,v)=1 \] 
if and only if 
\[ (u,v)=(1,q) \] 
or 
\[ (u,v)=(q,1). \] 
Thus the classical Jackson inversion identity is rigid within the deformed family. 

The result suggests a broader research direction. While the family \(\e_q(z,u)\) provides a rich deformation of classical \(q\)-special functions, structural identities need not survive under deformation. It would therefore be interesting to investigate whether similar rigidity phenomena occur for \(q\)-trigonometric identities, addition formulas, \(q\)-difference equations, operational representations and other characteristic properties of the classical \(q\)-functions. We hope that the methods developed here may serve as a first step towards a systematic study of rigidity phenomena in deformed \(q\)-special functions.

\section{Statements and Declarations}
\subsection{Conflict of Interests}
We have no conflict of interest to disclose.

\subsection{Data availability}
The author confirms that the manuscript does not use known data.

\subsection{Declaration of non-funding}
The authors received no support from any organization for the submitted work.

\bibliographystyle{plain}

\end{document}